\newtheorem{theorem}{Theorem}[section]
\newtheorem{proposition}[theorem]{Proposition}
\newtheorem{corollary}[theorem]{Corollary}
\newtheorem{example}{Example}
\newtheorem{remark}[theorem]{Remark}
\newtheorem{conjecture}[theorem]{Conjecture}
\begin{document}
\title[Nodal count for Dirichlet-to-Neumann operators]
{Nodal count for Dirichlet-to-Neumann operators with potential
}
\author{Asma Hassannezhad}
\address{University of Bristol,
School of Mathematics,
Fry Building,
Woodland Road,
Bristol, 
BS8 1UG, U.K.}
\email{asma.hassannezhad@bristol.ac.uk}
\author{David Sher}
\address{DePaul University, Department of Mathematical Sciences, 2320 N Kenmore Ave., Chicago, IL, 60614, U.S.A.}
\email{dsher@depaul.edu}
\date{}
\begin{abstract} We consider Dirichlet-to-Neumann operators associated to $\Delta+q$ on a Lipschitz domain in a smooth manifold, where $q$ is an $L^{\infty}$ potential. We prove a Courant-type bound for the nodal count of the extensions $u_k$ of the $k$th Dirichlet-to-Neumann eigenfunctions $\phi_k$ to the interior satisfying $(\Delta+q)u_k=0$.  The classical Courant nodal domain theorem is known to hold for Steklov eigenfunctions, which are the harmonic extension of the Dirichlet-to-Neumann eigenfunctions associated to $\Delta$. Our result extends it to a larger family of Dirichlet-to-Neumann operators.  Our proof makes use of the duality between the Steklov and Robin problems.
\end{abstract}
\maketitle
\noindent\keywords{\emph{Keywords.} Dirichlet-to-Neumann operator, nodal count, Courant-type bound, Steklov problem.} \\
\subjclass{\emph{Mathematics subject classification}. 58J50, 35P15; 58J40; 58C40}
\section{Introduction}
We consider Dirichlet-to-Neumann operators associated to the Laplace operator with a potential. Let $M$ be a smooth Riemannian manifold, $\Omega\subseteq M$ a connected Lipschitz domain, and $q\in L^{\infty}(\Omega)$ a potential function. Consider the operator $\Delta_q := \Delta +q$ on $\Omega$, where $\Delta=-{\rm div}\, \nabla$ is the positive Laplacian. Denote by $\Delta_q^D$ the operator $\Delta_q$ with Dirichlet boundary condition on $\partial\Omega$. The operator $\Delta_q^D$ has discrete spectrum whose only accumulation point is $+\infty$.

Now let $\lambda\in\mathbb R$. 
We consider the Dirichlet-to-Neumann operator $\mathcal D_{q,\lambda}$ associated to $\Delta_q-\lambda$.  We first define this in the case where $\lambda$ is not an eigenvalue of $\Delta_q^D$. In that case, for any $g\in L^2(\partial\Omega)$, the equation
\[\begin{cases} \Delta_{q,\lambda}u = 0 & \textrm{ in }\Omega\\
 u = f & \textrm{ on }\partial\Omega\\
\end{cases}
\]
has a unique solution $u$, and we set
\[\mathcal D_{q,\lambda}f := \partial_nu,\]
where $\partial_nu$ is the outward pointing normal derivative of $u$ along $\partial\Omega$.
If $\lambda$ is an eigenvalue of $\Delta_q^D$, the solution is no longer unique, but we may still define $\mathcal D_{q,\lambda}$ by projecting off the subspace consisting of normal derivatives of Dirichlet eigenfunctions. 
As we will see, in either event, $\mathcal D_{q,\lambda}$ is a semi-bounded self-adjoint operator and  has discrete, real spectrum whose only accumulation point is $+\infty$. We denote its eigenvalues, with multiplicity, by $\{\sigma_k\}_{k=1}^{\infty}$, and fix a corresponding basis of eigenfunctions for $L^2(\partial\Omega)$ by $\{\phi_k\}_{k=1}^{\infty}$. Finally, define $\{u_k\}_{k=1}^{\infty}$ to be the interior extensions of $\phi_k$, that is, the functions for which
\[\begin{cases} (\Delta_{q}-\lambda)u_k = 0 & \textrm{ in }\Omega\\
 u_k = \phi_k & \textrm{ on }\partial\Omega,\\
\end{cases}
\]
again with the appropriate modifications when $\lambda$ is an eigenvalue of $\Delta_q^D$. As in the case $q=0$, we call $\{u_k\}$ the corresponding Steklov eigenfunctions.

In this paper, we discuss the nodal counts of both the Steklov eigenfunctions $u_k$ and the Dirichlet-to-Neumann eigenfunctions $\phi_k$. Throughout, we let $N_k$ be the number of nodal domains of $u_k$ on $\Omega$ and let $M_k$ be the number of nodal domains of $\phi_k$ on $\partial\Omega$.

In the case $q=0$, it is well-known that we have an analogue of the Courant nodal domain theorem for Steklov eigenfunctions (see~\cite{KS69,KKP14,GP17}). Specifically, \begin{equation*}\label{courntstek}N_k\leq k.\end{equation*} In this case, the
proof essentially uses three ingredients: the variational principle for eigenvalues, the unique continuation theorem for the solutions of a second order elliptic PDE, and the fact that harmonic functions are the unique minimizers of the Dirichlet energy for given boundary data.   The statement  does not hold when $q$ is an arbitrary nonzero potential. However, as we show, there is a replacement:
\begin{theorem}\label{thm:main1} With terminology as above, let $d$ be the number of non-positive Dirichlet eigenvalues of $\Delta_{q,\lambda}$, or equivalently the number of eigenvalues of $\Delta_q^D$ which are less than or equal to $\lambda$. Then for all $k\in\mathbb N$,
\[N_k\leq k + d.\]
\end{theorem}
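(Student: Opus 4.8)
The plan is to exploit the duality between the Dirichlet-to-Neumann problem and a one-parameter family of Robin problems, and then to invoke the classical Courant nodal domain theorem for the latter. For $\sigma\in\mathbb R$, let $\Delta_q^{\sigma}$ denote the self-adjoint realization of $\Delta_q$ on $\Omega$ associated with the quadratic form
\[
a_\sigma(v)=\int_\Omega\bigl(|\nabla v|^2+q\,v^2\bigr)-\sigma\int_{\partial\Omega}v^2,\qquad v\in H^1(\Omega),
\]
that is, the Robin Laplacian with boundary condition $\partial_n v=\sigma v$; write its eigenvalues as $\mu_1(\sigma)\le\mu_2(\sigma)\le\cdots$ with $L^2(\Omega)$-orthonormal eigenfunctions. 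The first observation is that $\sigma$ is an eigenvalue of $\mathcal D_{q,\lambda}$ of multiplicity $m$ precisely when $\lambda$ is an eigenvalue of $\Delta_q^{\sigma}$ of multiplicity $m$, and the eigenfunctions coincide. In particular $u_k$ solves $(\Delta_q-\lambda)u_k=0$ in $\Omega$ and $\partial_n u_k=\sigma_k u_k$ on $\partial\Omega$, so it is an eigenfunction of the genuine elliptic problem $\Delta_q^{\sigma_k}$ with eigenvalue $\lambda$. Courant's theorem applies to this standard self-adjoint problem, and the whole task reduces to locating $\lambda$ correctly inside the spectrum of $\Delta_q^{\sigma_k}$.

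To that end I would study the counting function $n(\sigma):=\#\{j:\mu_j(\sigma)\le\lambda\}$, which by min-max equals the maximal dimension of a subspace of $H^1(\Omega)$ on which
\[
b_\sigma(v)=\int_\Omega\bigl(|\nabla v|^2+(q-\lambda)v^2\bigr)-\sigma\int_{\partial\Omega}v^2
\]
is non-positive. The spectrum of $\Delta_q^{\sigma}$ splits into two kinds of branches: the $\sigma$-independent ones, realized by Dirichlet eigenfunctions of $\Delta_q$ (which lie in $H^1_0(\Omega)$ and satisfy the Robin condition trivially), and the remaining ones, which are strictly decreasing because $\tfrac{d}{d\sigma}\mu_j(\sigma)=-\int_{\partial\Omega}v_j^2<0$ whenever the boundary trace of $v_j$ is nonzero. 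The constant branches lying at or below $\lambda$ contribute exactly $d$ to $n(\sigma)$ for every $\sigma$, whereas each decreasing branch crosses the level $\lambda$ transversally, and by the Steklov--Robin duality this happens precisely when $\sigma$ is an eigenvalue of $\mathcal D_{q,\lambda}$. Since the boundary penalty in $b_\sigma$ forces $n(\sigma)\to d$ as $\sigma\to-\infty$, the counting function climbs from $d$ by exactly the Steklov multiplicity at each eigenvalue $\sigma_j$ of $\mathcal D_{q,\lambda}$. Consequently, if $\sigma_k=\sigma_{k_0}=\cdots=\sigma_{k_1}$ is the cluster containing the $k$th eigenvalue ($k_0\le k$), the smallest index $j_0$ with $\mu_{j_0}(\sigma_k)=\lambda$ equals $d+k_0\le d+k$.

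It then remains to run the Courant argument for $\Delta_q^{\sigma_k}$ in its sharp, per-index form to conclude $N_k\le j_0$. Assuming $N_k\ge j_0+1$, I would take nodal domains $D_1,\dots,D_{j_0+1}$ of $u_k$ and set $w_i:=u_k\mathbf{1}_{D_i}\in H^1(\Omega)$, which have disjoint supports. A Green's identity on each $D_i$, using $\Delta_q u_k=\lambda u_k$ together with $u_k=0$ on the interior part of $\partial D_i$ and $\partial_n u_k=\sigma_k u_k$ on $\partial\Omega\cap\partial D_i$, yields $a_{\sigma_k}(w_i)=\lambda\|w_i\|_{L^2(\Omega)}^2$, so $a_{\sigma_k}(w)=\lambda\|w\|_{L^2(\Omega)}^2$ for every $w$ in their span. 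Imposing orthogonality in $L^2(\Omega)$ to the first $j_0-1$ eigenfunctions of $\Delta_q^{\sigma_k}$ and the single condition $c_{j_0+1}=0$ (that is, $j_0$ linear conditions on the $j_0+1$ coefficients) leaves a nonzero $w$, supported away from $D_{j_0+1}$, whose Rayleigh quotient is exactly $\lambda=\mu_{j_0}(\sigma_k)$; by min-max $w$ lies in the $\lambda$-eigenspace and hence solves $(\Delta_q-\lambda)w=0$. Since $w$ vanishes on the open set $D_{j_0+1}$, unique continuation for $(\Delta_q-\lambda)w=0$ with $L^\infty$ potential forces $w\equiv0$, a contradiction. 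Thus $N_k\le j_0\le k+d$.

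The step I expect to be the main obstacle is the spectral counting of the second paragraph: making rigorous that $n(\sigma)$ starts at $d$ and increases by exactly the Steklov multiplicities, with jumps located precisely at the eigenvalues of $\mathcal D_{q,\lambda}$. This requires the full Steklov--Robin dictionary with matching multiplicities, the monotonicity and transversality of the genuinely $\sigma$-dependent branches (separating them cleanly from the $\sigma$-independent Dirichlet branches), and the correct identification of the $\sigma\to-\infty$ limit with the Dirichlet count $d$. The most delicate point is the degenerate case in which $\lambda$ is itself a Dirichlet eigenvalue of $\Delta_q^D$, where the definition of $\mathcal D_{q,\lambda}$ by projection and the interaction between a constant branch sitting at $\lambda$ and a crossing branch must be handled with care.
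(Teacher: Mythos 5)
Your overall strategy---Steklov--Robin duality, locating $\lambda$ at position $d+k_0$ in the spectrum of the Robin operator $\Delta_q^{\sigma_k}$, and then running Courant's argument with unique continuation---is essentially the paper's route (duality of kernels with matching multiplicities, analysis of the Robin eigenvalue branches $\sigma\mapsto\mu_j(\sigma)$, then the Courant nodal domain theorem applied to $u_k$ as a Robin eigenfunction). However, your second paragraph contains a genuine error. You claim the spectrum of $\Delta_q^{\sigma}$ contains ``$\sigma$-independent branches, realized by Dirichlet eigenfunctions of $\Delta_q$, which satisfy the Robin condition trivially.'' This is false: a Dirichlet eigenfunction $w$ has $w=0$ on $\partial\Omega$, so the Robin condition $\partial_n w=\sigma w$ would force $\partial_n w=0$ as well; then $w$ has vanishing Cauchy data on $\partial\Omega$, and unique continuation gives $w\equiv 0$. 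No nonzero Dirichlet eigenfunction is ever a Robin eigenfunction, so there are no constant branches at all. Your own Hadamard formula $\frac{d}{d\sigma}\mu_j(\sigma)=-\int_{\partial\Omega}v_j^2$ already shows this, since by the same unique continuation argument no Robin eigenfunction has vanishing boundary trace: \emph{every} branch is strictly decreasing. (The paper proves strict monotonicity of all branches differently, via discreteness of the Dirichlet-to-Neumann spectrum.)

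The counting conclusion you want---$n(\sigma)\to d$ as $\sigma\to-\infty$, with jumps of size equal to the Steklov multiplicity exactly at the eigenvalues of $\mathcal D_{q,\lambda}$, so that the minimal index $j_0$ with $\mu_{j_0}(\sigma_k)=\lambda$ equals $d+k_0$---is nevertheless true, but for a different reason than the one you give. The first $d$ branches stay below $\lambda$ for every finite $\sigma$ not because they are constant, but because they are strictly decreasing with $\lim_{\sigma\to-\infty}\mu_j(\sigma)=\lambda_{q,j}^D\le\lambda$ for $j\le d$; for $j>d$ one has $\lim_{\sigma\to-\infty}\mu_j(\sigma)=\lambda_{q,j}^D>\lambda$ and $\lim_{\sigma\to+\infty}\mu_j(\sigma)=-\infty$, so each such branch crosses the level $\lambda$ exactly once. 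Establishing these two limits is the real analytic content of this step (the paper proves the $\sigma\to-\infty$ limit via norm-resolvent convergence to the Dirichlet operator, and the $\sigma\to+\infty$ divergence by a separate argument); as written, you justify the $\sigma\to-\infty$ behaviour only through the erroneous constant-branch picture, so this step must be replaced rather than merely reworded. Once that is repaired, your third paragraph (Courant in the minimal-index form plus unique continuation for $L^\infty$ potentials) is correct and matches the paper's final step, in fact yielding the marginally sharper bound $N_k\le d+k_0\le d+k$.
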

\begin{remark}\label{remarksharp} This theorem is sharp in the sense that for any $d\in\mathbb N$, there exists a domain $\Omega$, a potential function $q$, and an integer $k$ for which $N_k=k+d$.
\end{remark}
\begin{remark}
     If $\Omega$ is a fixed subdomain of $\mathbb R^n$ and $q$ is sufficiently small, then perturbation theory (see e.g. \cite[Page 76]{Rellich}) implies that $\Delta_q$ has only positive Dirichlet eigenvalues. The same is true when $q\ge0$. Thus, by Theorem \ref{thm:main1}, $N_k\leq k$ for the operator $\mathcal D_{q,0}$ in these cases.
\end{remark}

Very little is known about  the nodal count of the Dirichlet-to-Neumann eigenfunctions $\phi_k$. See Open Problem 9 in \cite{GP17}. The statement that $M_k\leq k$ is certainly not true in general, for the same reasons as for $N_k$. In fact, the situation is worse, as $\partial\Omega$ may be disconnected, in which case, even if $q=0$, the Courant nodal domain theorem cannot hold for the ground state $k=1$. When $q=0$ and the dimension of $\Omega$ is two, the fact that no nodal line is a closed curve implies an estimate on $M_k$ in terms of $k$ and the topology of the domain. For example, for a simply connected domain, the bound is $2k$ \cite[Lemma 3.4]{AM94}. However, for $q\neq0$ no such bound exists. See Example \ref{keyexample} below. In higher dimension, nothing is known regarding bounds for $M_k$ even when $q=0$. The main difficulty is that $\mathcal{D}_{q,\lambda}$  is nonlocal and the method of the proof we employ to study the nodal count of $u_k$ cannot be generalised to  study the nodal count of the Dirichlet-to-Neumann eigenfunctions $\phi_k$.

We conjecture the following \emph{asymptotic} version of the Courant nodal domain theorem:
\begin{conjecture}\label{conj:wishlist}
With terminology as above,
\[\limsup_{k\to\infty}\frac{M_k}{k}\leq 1.\]
\end{conjecture}

\begin{remark}
     Note that the corresponding result for $N_k$,
\[\limsup_{k\to\infty}\frac{N_k}{k}\leq 1,\]
follows immediately from Theorem \ref{thm:main1}.
\end{remark}

\begin{remark} If Conjecture \ref{conj:wishlist} is true, it would immediately imply 
\begin{equation}\label{oq9gp}
    M_k\le k+o(k).
\end{equation}
This would yield a partial answer to Open Question 9 in \cite{GP17}.
\end{remark}

We also conjecture the following sharpened version in dimension at least three. This is motivated by the Pleijel theorem for the nodal count of the Laplace operator \cite{BM82, Pl56}. 
\begin{conjecture}\label{conj:wishlist2}
When the dimension of $\Omega$ is at least three, \[\limsup_{k\to\infty}\frac{M_k}{k}< 1\]
and
 \[\limsup_{k\to\infty}\frac{N_k}{k}< 1.\]
\end{conjecture}

In fact, this sharpened version is true in a number of special cases. For example, suppose that $\Omega$ is a cylinder $[0,1]\times \Sigma$, where $\Sigma$ is a compact manifold of dimension at least two. One can use separation of variables and Pleijel's theorem \cite{BM82, Pl56} to show that
\[\limsup\frac{M_k}{k}\leq c<1.\]
The same result is true if $M_k$ is replaced by $N_k$. A similar result holds if $\Omega$ is a ball in $\mathbb R^n$, with $n\geq 3$.\\

The key example to keep in mind is the following, motivated by \cite[Figure 1]{GKLP21}. In particular, it shows that $\frac{N_k}{k}$ and $\frac{M_k}{k}$ are only \emph{asymptotically}  bounded by one.
\begin{example}\label{keyexample}
Let $\Omega$ be the unit disk, set $\lambda=0$, and let $q$ be the constant function $-\mu$ for some $\mu\ge 0$. Then the spectrum of $\mathcal D_{q,\lambda}$ is of the form
\begin{equation}
    \left\{\frac{\sqrt{\mu}J_n'(\mu)}{J_n(\mu)},\ n\in\mathbb N_0\right\},
\end{equation}
with a corresponding basis of eigenfunctions $J_n(\sigma r)e^{\pm in\theta}$ \cite{GKLP21}. 

Note that $J_n(\mu)$ is zero if and only if $\mu$ is a Dirichlet eigenvalue of $\Delta+q=\Delta-\mu$. So fix a particular $n$ and consider what happens as $\mu$ approaches the first zero $j_{n,1}$ of $J_n(x)$ from below. The eigenvalue of $\mathcal D_{-\mu,\lambda}$ corresponding to that particular $n$ will go to $-\infty$. (It is simple if $n=0$ and double if $n>0$.) Since the Dirichlet eigenvalues of a disk all have multiplicity at most 2, all other eigenvalues stay bounded below. If we choose
\[\mu = j_{n,1}-\epsilon\]
for a sufficiently small $\epsilon>0$, then the smallest eigenvalue of $\mathcal D_{-\mu,\lambda}$ will be $\sigma=\frac{\sqrt{\mu}J_n'(\mu)}{J_n(\mu)}$, with eigenfunction(s) $J_n(\sigma r)e^{\pm in\theta}$. So these eigenfunction(s) are the ground state eigenfunction(s) for $\mathcal D_{-\mu,\lambda}$, i.e. they have $k=1$. However, each of them has $n$ boundary nodal domains and $n$ interior nodal domains as well, so we have $N_k=M_k=n$. Since $n$ is arbitrary, not only can we have $N_k>k$, but we can have as large a discrepancy as we like, illustrating the sharpness in Remark \ref{remarksharp}.
\end{example}

The key method for the proof of Theorem \ref{thm:main1} is to make use of Steklov-Robin duality. This is the observation that the two-parameter problem
\[\begin{cases} \Delta_{q}u = \lambda u & \textrm{ in }\Omega\\
\partial_n u = \sigma u & \textrm{ on }\partial\Omega\\
\end{cases}
\]
may be viewed either as a Steklov problem for fixed $\lambda$, with eigenvalue parameter $\sigma$, or as a Robin problem for fixed $\sigma$, with eigenvalue parameter $\lambda$. This idea has a long history, at least in the case $q=0$. It was first written down in \cite{GNP} but seems to have been known to others, including Caseau and Yau (see the discussion in \cite{AM2}). In 1991, L.~Friedlander rediscovered it and used it to give a proof of the interlacing of Dirichlet and Neumann eigenvalues for domains in $\mathbb R^n$ \cite{Fri, Maz91}. In \cite{AM1,AM2}, Arendt and Mazzeo generalized the Steklov-Robin duality to manifolds; though Friedlander's inequalities fail in that setting, the duality results themselves still hold. Some duality results with nonzero potential, though nominally in the Euclidean setting only, are given in \cite{AEKS14}. Finally, we should note that Steklov-Robin duality has been used to compare Steklov eigenvalues and eigenvalues of the boundary Laplacian, see for example \cite{GKLP21}, which gave us the idea for Example \ref{keyexample}.

\section{Modified Courant nodal domain theorem for  Steklov eigenfunctions with potential}\label{sect2}

Let $\Omega$ be a Lipschitz domain in a smooth Riemannian manifold $M$. Let $q\in L^{\infty}(\Omega)$ be a potential. It is enough to prove Theorem \ref{thm:main1} for $\lambda=0$, as $\lambda$ may be absorbed into the potential $q$. Therefore, Theorem \ref{thm:main1} is an immediate consequence of the following result:

\begin{theorem}\label{courantplusd} Suppose that $\Omega$ and $q$ are as above. 
Suppose that $\{u_k\}_{k=1}^\infty$ is a complete set of Steklov eigenfunctions for $\Delta_q$, and $N_k$ is the number of nodal domains of $u_k$ on $\Omega$. Then
\[N_k \leq k + d,\]
where $d$ is the number of non-positive eigenvalues of the following Dirichlet eigenvalue problem:
\[\begin{cases}  \Delta_qu=\lambda u&\textrm{in}~\Omega\\
u=0& \textrm{on}~\partial \Omega.\end{cases}\]
\end{theorem}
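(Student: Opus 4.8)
The plan is to trade the Steklov nodal problem, which is non-standard because its eigenvalue parameter $\sigma$ lives on the boundary, for an honest interior eigenvalue problem via Steklov--Robin duality, and then to run the classical Courant argument there. Since we have reduced to $\lambda=0$, each Steklov eigenfunction $u_k$ satisfies $\Delta_q u_k=0$ in $\Omega$ and $\partial_n u_k=\sigma_k u_k$ on $\partial\Omega$. Freezing $\sigma=\sigma_k$ and reading this as an equation in the interior eigenvalue $\mu$, the function $u_k$ is exactly an eigenfunction, with eigenvalue $\mu=0$, of the Robin problem
\[
\begin{cases}\Delta_q v=\mu v &\text{in }\Omega,\\ \partial_n v=\sigma_k v &\text{on }\partial\Omega.\end{cases}
\]
The associated quadratic form $\int_\Omega(|\nabla v|^2+qv^2)-\sigma_k\int_{\partial\Omega}v^2$ is bounded below and closed on $H^1(\Omega)$, and the embedding $H^1(\Omega)\hookrightarrow L^2(\Omega)$ is compact on a bounded Lipschitz domain, so the Robin spectrum $\mu_1(\sigma_k)\le\mu_2(\sigma_k)\le\cdots$ is discrete. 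My first step is to record this reformulation together with the basic spectral theory of the Robin operator.

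My second step is a Courant nodal domain theorem for this Robin problem. Now that the eigenvalue multiplies $v$ in the interior in the usual way, the classical variational proof applies: if the eigenvalue of $u_k$ occupies the position $p$ in the Robin spectrum --- with $p$ the least index realizing it --- then $u_k$ has at most $p$ nodal domains. Concretely, were there $p+1$ nodal domains $D_1,\dots,D_{p+1}$, one forms $w=\sum_{i=1}^{p}c_i\,u_k\chi_{D_i}\in H^1(\Omega)$ orthogonal to the first $p-1$ Robin eigenfunctions; each $u_k\chi_{D_i}$ realizes the Robin Rayleigh quotient $\mu=0$ (using that $u_k$ vanishes on the nodal set bounding $D_i$ and satisfies the Robin condition on $\partial\Omega\cap\partial D_i$), so $w$ is itself a Robin eigenfunction for $\mu=0$ that vanishes on the open set $D_{p+1}$, contradicting unique continuation. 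The only ingredients are the variational principle and unique continuation for $\Delta_q$, the latter valid for $q\in L^\infty$.

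The third step, which is the heart of the matter, is to compute the index $p$ using the duality, i.e.\ to track how the Robin eigenvalues move as $\sigma$ varies. For each fixed $j$, the curve $\sigma\mapsto\mu_j(\sigma)$ is strictly decreasing: increasing $\sigma$ lowers the Rayleigh quotient, and strictly so because every Robin eigenfunction has nonzero boundary trace --- one with vanishing trace would have vanishing Cauchy data and hence vanish identically by unique continuation. Thus each curve meets the level $0$ at most once, and these crossings are precisely the Steklov eigenvalues, with the Steklov and Robin multiplicities agreeing because the two eigenspaces literally coincide. As $\sigma\to-\infty$ the Robin eigenvalues converge to the Dirichlet eigenvalues $\lambda_1^D\le\lambda_2^D\le\cdots$ of $\Delta_q$, so for all sufficiently negative $\sigma$ the number of non-positive Robin eigenvalues equals $d$. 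This count is non-decreasing in $\sigma$ and increases by the correct multiplicity each time $\sigma$ passes a Steklov eigenvalue, so at $\sigma=\sigma_k$ the eigenvalue $0$ first appears at position $p=d+k_0$, where $k_0\le k$ is the smallest index with $\sigma_{k_0}=\sigma_k$. Combining with Step 2 yields $N_k\le d+k_0\le k+d$.

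I expect the main obstacle to be Step 3: packaging the duality so that the crossing count is rigorous. The delicate points are the limiting identification $\mu_j(\sigma)\to\lambda_j^D$ as $\sigma\to-\infty$ (a Robin-to-Dirichlet degeneration) and the matching of multiplicities at $\sigma_k$, which together force the index shift to be exactly $d$; it is precisely this that yields the sharp constant $k+d$ rather than a bound inflated by the multiplicity of $\sigma_k$. A secondary, more routine point is to check that the Courant argument of Step 2 goes through on a merely Lipschitz domain with an $L^\infty$ potential, which uses only that the truncations $u_k\chi_{D_i}$ lie in $H^1(\Omega)$ and that unique continuation holds.
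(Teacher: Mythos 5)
Your proposal is correct and takes essentially the same route as the paper: reduce via Steklov--Robin duality to viewing $u_k$ as a Robin eigenfunction with eigenvalue $0$ at boundary parameter $\sigma_k$, track the strictly decreasing, continuous eigenvalue curves $\sigma\mapsto\lambda_{q,j}(\sigma)$ with Dirichlet limits as $\sigma\to-\infty$ to place $0$ at position at most $d+k$ in the Robin spectrum (your Step 3 is precisely the content of Propositions \ref{lambda_q,k} and \ref{d+k}), and then apply Courant's nodal domain theorem for the Robin problem. The only minor differences are cosmetic: you argue strict monotonicity via nonvanishing boundary traces and unique continuation where the paper instead uses discreteness of the Dirichlet-to-Neumann spectrum, and you spell out the Courant argument that the paper simply invokes.
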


The proof of Theorem \ref{courantplusd} uses Steklov-Robin duality. 
Let introduce two parameters, $\lambda$ and $\sigma$, and consider the problem
\[\begin{cases}\Delta_qu=\lambda u,&\text{in}~\Omega\\
 \partial_nu=\sigma u,& \text{on}~\partial \Omega.\end{cases}\]
One may consider $\lambda$ as the spectral parameter, in which case we have a Robin problem with fixed $\sigma$, or consider $\sigma$ as the spectral parameter, in which case we have a Steklov-type problem with fixed $\lambda$.  We let $\lambda_{q,k}(\sigma)$ be the $k$th eigenvalue of $\Delta_{q,\sigma}$. Observe that the $k$th Steklov eigenfunction $u_k$ is an eigenfunction with eigenvalue $\lambda=0$ for the Robin problem:
\[\begin{cases}\Delta_qu=\lambda u&\text{in}~\Omega\\
 \partial_nu=\sigma_ku& \text{on}~\partial \Omega.\end{cases}\]
The question is for which $m$ $\lambda_{q,m}(\sigma_k)$ is equal to 0.

The duality results we need essentially follow from \cite{AM1}, \cite{AM2}, and \cite{AEKS14}. However, they are not stated in quite this much generality, and so we give a proof here. Our approach is modeled primarily on \cite{AM2}.

First, we define the Robin Laplacian $\Delta_{q,\sigma}$ by using the weak formulation. Consider the form, for $u$, $v\in H^1(\Omega)$,
\[b_{q,\sigma}(u,v) = \int_{\Omega}(\nabla u\cdot\overline{\nabla v} + qu\overline{v})\, dV_{\Omega} - \sigma\int_{\partial\Omega}u\overline{v}\, dV_{\partial\Omega}.\]
Since $q\in L^{\infty}$, this form is coercive, and so it determines an operator $\Delta_{q,\sigma}$, which is the Robin Laplacian. The domain of $\Delta_{q,\sigma}$ is the same as the domain of $\Delta_{0,\sigma}$, namely
\[\{u:u\in L^2(\Omega), \Delta u\in L^2(\Omega), \partial_n u = \sigma u ~\text{on $\partial\Omega$}\}.\]
A Dirichlet Laplacian with potential, $\Delta_{q}^D$, may also be defined as usual.

For each $\lambda$ which is not in the spectrum of $\Delta_q^D$, we define the Dirichlet-to-Neumann operator $\mathcal D_{q,\lambda}$. If $g\in L^2(\partial\Omega)$ and $u\in H^1(\Omega)$ 
is the unique solution of 
\[\begin{cases} (\Delta_{q}-\lambda)u =0 & \textrm{ in }\Omega\\
 u = f& \textrm{ on }\partial\Omega\\
\end{cases}
\]
then we set $\mathcal D_{q,\lambda}f=\partial_n u$. This is enough for many purposes. However, we need to consider $\lambda$ which are in the Dirichlet spectrum of $\Delta_q$. There are several ways to do this, the simplest of which is to restrict to the orthogonal complement of the kernel. Following \cite{AM2}, we define
\[K(\lambda) = \{\partial_nw:\Delta_{q}w =  \lambda w~\text{weakly}, w|_{\partial\Omega}=0, \partial_nw\in L^2(\partial\Omega)\}.\]
Then $\mathcal D_{q,\lambda}$ may in all cases be defined as an operator on $L_{\lambda}^2(\partial\Omega):=(K(\lambda))^{\perp}$, \label{L_lambda} where the orthogonal complement is taken in $L^2(\partial\Omega)$.

\begin{proposition} For any $\lambda\in\mathbb R$, $\mathcal D_{q,\lambda}$ is self-adjoint, has compact resolvent, and is bounded below.
\end{proposition}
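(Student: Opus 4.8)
The plan is to realize $\mathcal D_{q,\lambda}$ as the self-adjoint operator associated to a quadratic form on $L^2(\partial\Omega)$, and to read off all three assertions from the form–representation theorem. For $f$ in the trace space $H^{1/2}(\partial\Omega)$ (intersected with $L^2_\lambda=(K(\lambda))^{\perp}$ when $\lambda$ is a Dirichlet eigenvalue of $\Delta_q$), let $u_f$ be the extension with $(\Delta_q-\lambda)u_f=0$ and $u_f|_{\partial\Omega}=f$, and set
\[\mathfrak d(f,g)=\int_\Omega\nabla u_f\cdot\overline{\nabla u_g}\,dV_\Omega+\int_\Omega(q-\lambda)u_f\overline{u_g}\,dV_\Omega.\]
A single application of Green's identity, using $\Delta u_f=(\lambda-q)u_f$, collapses this to $\mathfrak d(f,g)=\int_{\partial\Omega}(\partial_n u_f)\overline g\,dV_{\partial\Omega}$. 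This does two jobs at once: it shows $\mathfrak d$ is symmetric (since $q,\lambda$ are real), and it identifies the operator associated to $\mathfrak d$ with $f\mapsto\partial_n u_f$, that is, with $\mathcal D_{q,\lambda}$. Everything then reduces to showing that $\mathfrak d$ is densely defined, closed, and semibounded.

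For semiboundedness I would first invoke boundedness of the solution operator $f\mapsto u_f$ from $L^2(\partial\Omega)$ into $L^2(\Omega)$, giving $\|u_f\|_{L^2(\Omega)}\le C\|f\|_{L^2(\partial\Omega)}$. Boundedness below is then immediate, since
\[\mathfrak d(f,f)=\int_\Omega|\nabla u_f|^2+\int_\Omega(q-\lambda)|u_f|^2\ge -\|q-\lambda\|_{L^\infty}\|u_f\|_{L^2(\Omega)}^2\ge -C'\|f\|_{L^2(\partial\Omega)}^2.\]
As an alternative consistent with the paper's theme, boundedness below also follows from Steklov--Robin duality: the Robin eigenvalues $\lambda_{q,m}(\sigma)$ are monotone in $\sigma$ and converge to the Dirichlet eigenvalues of $\Delta_q$ as $\sigma\to-\infty$, so $\lambda$ can be a Robin eigenvalue only for $\sigma$ bounded below, forcing the $\mathcal D_{q,\lambda}$-spectrum to be bounded below.

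Closedness and the compact resolvent both come from a G\aa rding estimate $\mathfrak d(f,f)+C\|f\|_{L^2(\partial\Omega)}^2\gtrsim\|f\|_{H^{1/2}(\partial\Omega)}^2$. To get the lower bound I would compare $u_f$ with the genuine harmonic extension $h_f$ of $f$: since $h_f$ minimizes the Dirichlet energy among all $H^1$-extensions of $f$ and $u_f$ is one such extension, $\int_\Omega|\nabla u_f|^2\ge\int_\Omega|\nabla h_f|^2$, and $\int_\Omega|\nabla h_f|^2$ is comparable to $\|f\|_{H^{1/2}}^2$ up to a multiple of $\|f\|_{L^2(\partial\Omega)}^2$ by the harmonic characterization of the $H^{1/2}$-norm. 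The reverse bound $\mathfrak d(f,f)\lesssim\|f\|_{H^{1/2}}^2$ is just continuity of the extension map. Hence the form norm is equivalent to the $H^{1/2}$-norm, so $\mathfrak d$ is closed with form domain $H^{1/2}(\partial\Omega)\cap L^2_\lambda$; and since $H^{1/2}(\partial\Omega)\hookrightarrow L^2(\partial\Omega)$ is compact (Rellich on the Lipschitz boundary), the associated operator has compact resolvent.

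The hard part will be the case where $\lambda$ is a Dirichlet eigenvalue, handled by passing to $L^2_\lambda=(K(\lambda))^{\perp}$. There I must check that the (non-invertible) Dirichlet problem nonetheless admits a bounded solution operator on $L^2_\lambda$, obtained from the Fredholm alternative by taking $u_f$ orthogonal to $\ker(\Delta_q^D-\lambda)$; that $H^{1/2}(\partial\Omega)\cap L^2_\lambda$ is dense in $L^2_\lambda$ (using $\dim K(\lambda)<\infty$); and that both Green's identity and the Dirichlet-minimization comparison survive this restriction. A secondary difficulty is that $\Omega$ is merely Lipschitz, so the trace theory, the $L^2$-solvability of the Dirichlet problem, and the harmonic characterization of the $H^{1/2}$-norm must all be used in their Lipschitz-domain forms rather than imported from smooth elliptic theory.
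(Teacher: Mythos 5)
Your proposal is correct in outline, but it is a genuinely different proof from the one in the paper. The paper gives essentially no direct argument: it reduces the statement to the framework of \cite{AEKS14}, in which the Dirichlet-to-Neumann operator is realized as a \emph{graph} (multi-valued operator) associated with the form $u\mapsto\int_\Omega|\nabla u|^2+(q-\lambda)|u|^2$ on $H^1(\Omega)$, pushed to $L^2(\partial\Omega)$ by the non-injective trace map; the only work is checking that the Euclidean, $\lambda=0$ example treated there carries over to manifolds and to $\lambda\neq 0$, after which the three assertions are quoted from Theorem 4.5, Proposition 4.8, and Theorem 4.15 of \cite{AEKS14}. You instead build the form directly on the boundary, with domain $H^{1/2}(\partial\Omega)\cap L^2_\lambda$ and values computed through the solution operator $f\mapsto u_f$, identify the associated self-adjoint operator with $f\mapsto\partial_n u_f$ via Green's identity, and get closedness and compact resolvent from a G\aa rding inequality plus compactness of $H^{1/2}(\partial\Omega)\hookrightarrow L^2(\partial\Omega)$. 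This buys a self-contained proof with an explicit form domain, which the paper's citation-based proof does not provide; but there are three costs worth naming. First, as written, both your semiboundedness step and your G\aa rding step rest on the estimate $\|u_f\|_{L^2(\Omega)}\lesssim\|f\|_{L^2(\partial\Omega)}$, i.e.\ $L^2$-solvability of the Dirichlet problem, which on a Lipschitz subdomain of a Riemannian manifold is Dahlberg/Mitrea--Taylor level machinery, heavier than anything the paper uses; you can avoid it entirely by an Ehrling-type argument: $f\mapsto u_f$ is \emph{compact} from $H^{1/2}(\partial\Omega)$ to $L^2(\Omega)$ because it factors through $H^1(\Omega)$, whence $\|u_f\|_{L^2(\Omega)}^2\le\epsilon\|f\|_{H^{1/2}(\partial\Omega)}^2+C_\epsilon\|f\|_{L^2(\partial\Omega)}^2$, and this is precisely the ``hidden compactness'' on which the cited framework runs. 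Second, your fallback argument for boundedness below via Steklov--Robin duality is a supplement, not a substitute: in the paper the relevant facts about $\lambda_{q,k}(\sigma)$ (Propositions \ref{lambda_q,k} and \ref{d+k}) are proved \emph{after} this proposition and use the discreteness of the spectrum of $\mathcal D_{q,\lambda}$, i.e.\ self-adjointness and compact resolvent, so that route is circular unless the first two assertions are established independently, as your main argument does. Third, when $\lambda$ is a Dirichlet eigenvalue you should record the check that $\mathfrak d$ is well defined: replacing $u_f$ by $u_f+w$ with $w\in\ker(\Delta_q^D-\lambda)$ changes $\mathfrak d(f,g)$ by $\langle\partial_n w,g\rangle_{L^2(\partial\Omega)}$, which vanishes exactly because the test trace $g$ lies in $(K(\lambda))^{\perp}$ --- the same Green's identity computation as in the paper's Proposition \ref{kernels}, and the place where the restriction to $L^2_\lambda$ does its work.
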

\begin{proof} A proof is given in \cite{AM2} when $q=0$. However, it depends on a result of Gr\'egoire, N\'ed\'elec, and Planchard \cite{GNP}, which is only stated in the setting $q=0$. We instead use the machinery of \cite{AEKS14}, which instead views the Dirichlet-to-Neumann operator as a graph, that is, as a multi-valued operator. From \cite[Proposition 3.3]{AEKS14}, it suffices to prove that this graph is self-adjoint, has compact resolvent, and is bounded below. Yet this is essentially the content of \cite[Example 4.9]{AEKS14}. Although stated in the setting $M=\mathbb R^n$ and $\lambda=0$, every assertion there holds when $M$ is an arbitrary Riemannian manifold, and a nonzero $\lambda$ may be treated as part of the potential. The three parts of our Proposition then follow from Theorem 4.5, Proposition 4.8, and Theorem 4.15 of \cite{AEKS14}.
\end{proof}

As a consequence, the spectrum of $\mathcal D_{q,\lambda}$ is contained in the real axis, discrete, and has only the accumulation point at infinity.\\

In what follows we use the notational conventions:
\[\mathcal{D}_{q,0}:=\mathcal{D}_q,\quad \mathcal{D}_{0,0}=:\mathcal{D}.\]
Obviously, we have $\mathcal{D}_{q,\lambda}=\mathcal{D}_{q-\lambda}$. However, it will be convenient to separate the role of  $\lambda$ from the potential $q$ to  highlight the connection between $D_{q,\lambda}$ and the Robin problem.

The following proposition, encapsulating the Steklov-Robin duality, is the analogue of \cite[Theorem 3.1]{AM2} and is proved in identical fashion.
\begin{proposition}\label{kernels}
For any $\lambda,\sigma\in\mathbb R$, the trace map is an isomorphism from $\ker(\Delta_{q,\sigma}-\lambda)$ to $\ker(\mathcal D_{q,\lambda}-\sigma)$.
\end{proposition}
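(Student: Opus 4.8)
The plan is to exhibit the trace map $u\mapsto u|_{\partial\Omega}$ and the interior-extension map as mutually inverse linear bijections between the two eigenspaces, reducing everything to the definitions of $\Delta_{q,\sigma}$ and $\mathcal{D}_{q,\lambda}$ together with Green's identity and unique continuation. Since both spectra are discrete, the eigenspaces are finite dimensional, so once the trace map is shown to be a bijection it is automatically an isomorphism.

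\emph{Forward direction.} First I would take $u\in\ker(\Delta_{q,\sigma}-\lambda)$. By the weak formulation defining $\Delta_{q,\sigma}$, this says precisely that $(\Delta_q-\lambda)u=0$ in $\Omega$ and $\partial_n u=\sigma u$ on $\partial\Omega$. Set $f=u|_{\partial\Omega}$. Two things must be checked: that $f\in L_\lambda^2(\Omega)=K(\lambda)^\perp$, so that $\mathcal{D}_{q,\lambda}f$ is even defined, and that $\mathcal{D}_{q,\lambda}f=\sigma f$. For the first, I would pair $u$ against an arbitrary Dirichlet eigenfunction $w\in\ker(\Delta_q^D-\lambda)$ in Green's identity for $\Delta_q-\lambda$; since both $u$ and $w$ solve the interior equation and $w|_{\partial\Omega}=0$, the identity collapses to $\int_{\partial\Omega}f\,\overline{\partial_n w}=0$, i.e. $f\perp K(\lambda)$. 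Given this, the identity $\mathcal{D}_{q,\lambda}f=\partial_n u=\sigma u|_{\partial\Omega}=\sigma f$ is immediate from the definition of the Dirichlet-to-Neumann operator, so the trace map carries $\ker(\Delta_{q,\sigma}-\lambda)$ into $\ker(\mathcal{D}_{q,\lambda}-\sigma)$.

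\emph{Reverse direction and injectivity.} Conversely, take $f\in\ker(\mathcal{D}_{q,\lambda}-\sigma)$. Unwinding the (possibly multivalued) definition of $\mathcal{D}_{q,\lambda}$, there is some $u\in H^1(\Omega)$ with $(\Delta_q-\lambda)u=0$, $u|_{\partial\Omega}=f$, and $\partial_n u-\sigma f\in K(\lambda)$, say $\partial_n u=\sigma f+\partial_n w$ for some $w\in\ker(\Delta_q^D-\lambda)$. The key observation is that $w$ can be absorbed: replacing $u$ by $\tilde u:=u-w$ leaves both the interior equation and the boundary trace unchanged, while now $\partial_n\tilde u=\sigma f=\sigma\tilde u|_{\partial\Omega}$, so $\tilde u$ lies in the domain of $\Delta_{q,\sigma}$ and $\tilde u\in\ker(\Delta_{q,\sigma}-\lambda)$ with trace $f$. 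This gives surjectivity of the trace map. For injectivity, if $u\in\ker(\Delta_{q,\sigma}-\lambda)$ has $u|_{\partial\Omega}=0$, then also $\partial_n u=\sigma u|_{\partial\Omega}=0$, so $u$ has vanishing Cauchy data; the unique continuation principle, valid for $L^\infty$ potentials, forces $u\equiv 0$. Combining the three steps yields the claimed isomorphism.

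The step I expect to require the most care is the degenerate case in which $\lambda$ is a Dirichlet eigenvalue of $\Delta_q$, since there $\mathcal{D}_{q,\lambda}$ is defined only after projecting off $K(\lambda)$ and restricting to $L_\lambda^2(\Omega)$. I must make sure the two definitions align: that the trace of a Robin eigenfunction automatically lands in $K(\lambda)^\perp$ (this is exactly the Green's-identity computation above), and that the multivaluedness appearing in the reverse direction is precisely the freedom to add a Dirichlet eigenfunction (handled by the absorption $\tilde u=u-w$). To be safe I would check these against the graph/multivalued formalism of \cite{AEKS14}, confirming that the weak notion of $\partial_n u$ used there is the one entering both $K(\lambda)$ and the eigenvalue equation; away from the Dirichlet spectrum the argument is unconditional and genuinely routine.
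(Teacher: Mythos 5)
Your proof is correct, and its overall architecture (trace maps into the eigenspace via Green's identity; injectivity; surjectivity via extension) is the same as the paper's; in particular, your forward direction—pairing $u$ against a Dirichlet eigenfunction $w$ and using that both solve the interior equation—is the paper's computation almost verbatim. Where you diverge is in the two remaining steps, and in both places your version is the more robust one in the degenerate case $\lambda\in\operatorname{spec}(\Delta_q^D)$, which is exactly the case the proposition is designed to cover. For injectivity, the paper argues that a Robin eigenfunction with zero trace and the zero function both solve the Dirichlet problem with data $g=0$ and appeals to uniqueness of that solution; as written, that uniqueness only holds when $\lambda$ is not a Dirichlet eigenvalue (otherwise the homogeneous problem has all of $\ker(\Delta_q^D-\lambda)$ as solutions). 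Your argument—zero trace plus the Robin condition forces zero Cauchy data, then unique continuation—works uniformly in $\lambda$, at the cost of invoking the unique continuation principle for $\Delta+q$ with $q\in L^\infty$ across a Lipschitz boundary (extend by zero outside $\Omega$ and apply interior unique continuation); this is standard but deserves an explicit citation. For surjectivity, the paper asserts that an eigenfunction $g$ of $\mathcal D_{q,\lambda}$ has an extension $u$ with $\partial_n u=\sigma g$ exactly, whereas the projected definition only yields $\partial_n u-\sigma g\in K(\lambda)$; your absorption step $\tilde u=u-w$, $w\in\ker(\Delta_q^D-\lambda)$, is precisely what is needed to bridge that gap, and it is consistent with the graph formalism of \cite{AEKS14} that the paper relies on. So the proposal is not only correct but fills in the details the paper glosses over at the Dirichlet spectrum.
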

\begin{remark}
Since $\mathcal D_{q,\lambda}=\mathcal D_{q-\lambda}$,   Proposition  \ref{kernels} is equivalent to show that the trace map is an isomorphism from $\ker(\Delta_{q,\sigma})$ to $\ker(\mathcal D_{q}-\sigma)$ for any $\sigma\in\mathbb{R}$ and $q\in L^\infty(\Omega)$.
\end{remark}
\begin{proof}
First we show that the trace map indeed maps into the indicated space. Suppose that $u\in\ker(\Delta_{q,\sigma}-\lambda)$. Then $u\in H^1(\Omega)$, so certainly Tr$(u)\in L^2(\partial\Omega)$. Since $u$ is in the domain of $\Delta_{q,\lambda}$, $\partial_nu$ exists and equals $\sigma$Tr$(u)$. And as long as Tr$(u)\in (K(\lambda))^{\perp}$, it is in the domain of $\mathcal D_{q,\lambda}$. In that event, we can say that $\mathcal D_{q,\lambda}($Tr$(u))=\sigma$Tr$(u)$, hence Tr$(u)\in\ker(\mathcal D_{q,\lambda}-\sigma)$.

To show that Tr$(u)\in(K(\lambda))^{\perp}$, suppose that $\partial_nw\in K(\lambda)$, with $w\in\ker(\Delta_{q}^D-\lambda)$. Then by Green's identity,
\[\langle\partial_nw,\textrm{Tr}(u)\rangle_{L^2(\partial\Omega)} = \langle\nabla w,\nabla u\rangle_{L^2(\Omega)} - \langle\Delta_{0,D}w, u\rangle_{L^2(\Omega)}. \]
Using Green's identity again, combined with the facts that $w$ is in the domain of the Dirichlet Laplacian and $w\in\ker(\Delta_{q}^D-\lambda)$, we have
\begin{equation}\label{Green2}
\langle\partial_nw,\textrm{Tr}(u)\rangle_{L^2(\partial\Omega)} = \langle w,\Delta u\rangle_{L^2(\Omega)} - \langle(\lambda-q)w, u\rangle_{L^2(\Omega)}.
\end{equation}
However, since $u\in\ker(\Delta_{q,\sigma}-\lambda)$, we know that $\Delta u = (\lambda-q)u$. Since $\lambda\in\mathbb R$ and $q$ is real-valued, the right-hand side of \eqref{Green2} is zero. Thus Tr $(u)\in (K(\lambda))^{\perp}$ and therefore the trace map does indeed map into $\ker(\mathcal D_{q,\lambda}-\sigma)$.

To show that the trace map is injective, suppose that $u\in\ker(\Delta_{q,\sigma}-\lambda)$ with Tr$(u)=0$. From our definition of $\mathcal D_{q,\lambda}$, we know that for any $g\in (K(\lambda))^{\perp}$, the problem
\[\begin{cases} (\Delta_{q}-\lambda)u =0 & \textrm{ in }\Omega\\
 u = g& \textrm{ on }\partial\Omega\\
\end{cases}
\]
has a unique solution whose trace is in $(K(\lambda))^{\perp}$. 
Since both $u$ and $0$ are solutions to this problem with $g=0$, we must have $u=0$.

Finally, surjectivity is straightforward: suppose that $g\in\ker (\mathcal D_{q,\sigma}-\lambda)$. By definition there is a function $u\in H^1(\Omega)$ such that $(\Delta_q-\lambda)u=0$, Tr$(u)=g$, and $\partial_n u =\sigma g$. This $u$ is an element of $\ker(\Delta_{q,\sigma}-\lambda)$ whose trace is $g$.
\end{proof}
An immediate consequence is
\begin{corollary}\label{AM:DtNRobin}
For any $\lambda,\sigma\in\mathbb R$, $\sigma$ is an element of the (Steklov) spectrum of $\mathcal D_{q,\lambda}$ if and only if $\lambda$ is an element of the (Robin) spectrum of $\Delta_{q,\sigma}$. Moreover their geometric multiplicities are the same.
\end{corollary}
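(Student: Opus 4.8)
The plan is to deduce the corollary directly from Proposition \ref{kernels} together with the spectral properties of the two operators. First I would recall that both $\mathcal{D}_{q,\lambda}$ and $\Delta_{q,\sigma}$ are self-adjoint with compact resolvent: for $\mathcal{D}_{q,\lambda}$ this is precisely the content of the preceding Proposition, while for the Robin operator $\Delta_{q,\sigma}$ it follows from the coercivity of the form $b_{q,\sigma}$ on $H^1(\Omega)$ combined with the compactness of the embedding $H^1(\Omega)\hookrightarrow L^2(\Omega)$ for a Lipschitz domain. Consequently the spectrum of each operator is purely discrete and consists exactly of eigenvalues of finite multiplicity.

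For such operators, a real number belongs to the spectrum if and only if the corresponding kernel is nontrivial. Thus $\sigma$ lies in the spectrum of $\mathcal{D}_{q,\lambda}$ precisely when $\ker(\mathcal{D}_{q,\lambda}-\sigma)\neq\{0\}$, and $\lambda$ lies in the spectrum of $\Delta_{q,\sigma}$ precisely when $\ker(\Delta_{q,\sigma}-\lambda)\neq\{0\}$. This reduction is the only place where the discreteness of the spectrum (equivalently, the compact resolvent) is genuinely used.

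Now I would invoke Proposition \ref{kernels}, which asserts that the trace map is an isomorphism between $\ker(\Delta_{q,\sigma}-\lambda)$ and $\ker(\mathcal{D}_{q,\lambda}-\sigma)$. An isomorphism of vector spaces sends the zero space to the zero space and carries nonzero spaces to nonzero spaces, so one kernel is trivial if and only if the other is. This immediately yields the stated equivalence: $\sigma$ is in the spectrum of $\mathcal{D}_{q,\lambda}$ iff $\lambda$ is in the spectrum of $\Delta_{q,\sigma}$. Moreover, since the geometric multiplicity of an eigenvalue is by definition the dimension of its eigenspace, i.e.\ of the corresponding kernel, and an isomorphism preserves dimension, the geometric multiplicities of $\sigma$ for $\mathcal{D}_{q,\lambda}$ and of $\lambda$ for $\Delta_{q,\sigma}$ coincide.

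I do not expect any serious obstacle, which is consistent with the statement being flagged as ``an immediate consequence.'' All the substantive analytic work has already been carried out in establishing Proposition \ref{kernels} and the spectral properties of the two operators; the corollary is a purely formal repackaging of the isomorphism of kernels into a statement about spectra and multiplicities.
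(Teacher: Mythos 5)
Your proof is correct and follows the paper's intended route: the paper offers no separate argument, declaring the corollary "an immediate consequence" of Proposition \ref{kernels}, and your write-up simply makes the immediacy explicit (discreteness of both spectra reduces spectral membership to nontriviality of the kernels, and the trace isomorphism identifies both the kernels' triviality and their dimensions). Nothing is missing; the appeal to self-adjointness and compact resolvent of $\mathcal D_{q,\lambda}$ (the preceding Proposition) and of $\Delta_{q,\sigma}$ (coercivity of $b_{q,\sigma}$ plus compactness of the embedding) is exactly the background the paper relies on.
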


The following statement describes its behaviour as $\sigma$ varies.
\begin{proposition}\label{lambda_q,k} For every $k\ge1$ the following hold:
\begin{itemize}
\item[(a)] $\lambda_{q,k}(\sigma)$ is strictly decreasing. 
\item[(b)] $\lambda_{q,k}$ as a function of $\sigma$ is continuous on $[-\infty,\infty)$. In particular,
\[\lim_{\sigma\to-\infty}\lambda_{q,k}(\sigma)=\lambda_{q,k}^D,\]
where $\lambda_k^D$ is the $k$-th Dirichlet eigenvalue of $\Delta_{q}^D$.
\item[(c)] $\lim_{\sigma\to\infty}\lambda_{q,k}(\sigma)=-\infty.$
\end{itemize}
\end{proposition}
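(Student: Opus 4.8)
The plan is to treat $\lambda_{q,k}(\sigma)$ via the min-max characterization of the Robin eigenvalues, using the quadratic form $b_{q,\sigma}$ introduced above. By the standard variational principle,
\[
\lambda_{q,k}(\sigma) = \min_{\substack{V\subseteq H^1(\Omega)\\ \dim V = k}}\ \max_{0\neq u\in V}\ \frac{b_{q,\sigma}(u,u)}{\|u\|_{L^2(\Omega)}^2}
= \min_{\substack{V}}\ \max_{0\neq u\in V}\ \frac{\int_\Omega(|\nabla u|^2+q|u|^2)\,dV_\Omega - \sigma\int_{\partial\Omega}|u|^2\,dV_{\partial\Omega}}{\int_\Omega|u|^2\,dV_\Omega}.
\]
The key structural observation is that, for fixed $u\in H^1(\Omega)$, the Rayleigh quotient is an \emph{affine} function of $\sigma$ with slope $-\int_{\partial\Omega}|u|^2 / \int_\Omega|u|^2 \leq 0$.

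For part (a), I would first argue monotonicity: since each Rayleigh quotient is non-increasing in $\sigma$, the min-max is non-increasing, so $\lambda_{q,k}(\sigma)$ is non-increasing. To upgrade this to \emph{strict} decrease, I would use that a minimizing subspace is spanned by genuine eigenfunctions $u_1,\dots,u_k$ of $\Delta_{q,\sigma}$, and no nontrivial eigenfunction can vanish identically on $\partial\Omega$ (otherwise it would be a Dirichlet eigenfunction with $\partial_n u = 0$, contradicting unique continuation). Hence the boundary integral is strictly positive on the minimizing subspace, forcing the slope to be strictly negative; comparing at $\sigma_1 < \sigma_2$ by plugging the optimal subspace for $\sigma_1$ into the min-max for $\sigma_2$ then gives strict inequality.

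For part (b), continuity on $(-\infty,\infty)$ follows from the fact that $\lambda_{q,k}(\sigma)$ is a min-max of affine (hence continuous, in fact convex being a max of affine, and the min preserving continuity) functions of $\sigma$; I would make this rigorous via a standard sandwich/Lipschitz estimate on compact $\sigma$-intervals, using that the boundary trace term is uniformly controlled by $\|u\|_{H^1}^2$ through the trace inequality. The limit $\sigma\to-\infty$ requires showing that the Robin problem converges to the Dirichlet problem: as $\sigma\to-\infty$, the penalty $-\sigma\int_{\partial\Omega}|u|^2$ becomes a large positive multiple of the boundary $L^2$-mass, so any fixed-energy test subspace is forced (in the limit) to concentrate on functions with vanishing trace, recovering $\lambda_{q,k}^D$. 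I would prove $\limsup_{\sigma\to-\infty}\lambda_{q,k}(\sigma)\leq\lambda_{q,k}^D$ by using the first $k$ Dirichlet eigenfunctions (which have zero trace, so the $\sigma$-term drops out) as a test subspace, and the matching $\liminf$ bound by a $\Gamma$-convergence / concentration-compactness argument showing that any sequence of near-optimal Robin eigenfunctions must have boundary trace tending to zero.

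For part (c), I would exhibit, for each large $M$, a $k$-dimensional trial subspace on which the Rayleigh quotient is below $-M$: take functions supported near $\partial\Omega$ with substantial boundary mass but controlled interior Dirichlet energy, so that as $\sigma\to\infty$ the term $-\sigma\int_{\partial\Omega}|u|^2$ dominates and drives the quotient to $-\infty$; one needs $k$ linearly independent such functions, which is easily arranged since $\partial\Omega$ carries infinitely many independent boundary profiles. The main obstacle is part (b), specifically the $\liminf$ direction of the Dirichlet limit: the $\limsup$ and the strict monotonicity are routine min-max manipulations, but showing that near-minimizers genuinely converge to Dirichlet eigenfunctions (rather than leaking mass to the boundary) requires a careful compactness argument controlling the boundary trace uniformly in $\sigma$, which is where I would expect the real work to lie.
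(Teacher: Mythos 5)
Your proposal takes a genuinely different route from the paper. The paper derives everything from the Steklov--Robin duality it has just established (Corollary \ref{AM:DtNRobin}): strict monotonicity in (a) follows because constancy of $\lambda_{q,k}$ on an interval $[\sigma,\tilde\sigma]$ would place that entire interval inside the spectrum of $\mathcal D_{q,\lambda}$, contradicting discreteness; part (c) follows because a uniform lower bound $\lambda$ would force the spectrum of $\mathcal D_{q,\lambda}$ to be the finite set $\{\sigma:\lambda_{q,j}(\sigma)=\lambda,\ j\le k-1\}$; and part (b) is obtained by adapting the norm-resolvent continuity results of Arendt--Mazzeo \cite[Propositions 2.6 and 2.8]{AM1}, which also cover $\sigma=-\infty$. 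You instead work directly with the variational principle: affine dependence of the form on $\sigma$ for monotonicity, unique continuation for strictness, an explicit trial subspace with traces linearly independent for (c), and a trace-penalty/compactness argument for the Dirichlet limit in (b). Your part (c) is correct and more self-contained than the paper's; the continuity at finite $\sigma$ and the $\limsup$ half of (b) (testing with Dirichlet eigenfunctions, which in fact gives $\lambda_{q,k}(\sigma)\le\lambda_{q,k}^D$ for every $\sigma$) are also fine.

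Two points need repair, however. First, in (a) there is a logical slip: from ``no nontrivial eigenfunction has vanishing trace'' you conclude that the boundary integral is strictly positive on the whole minimizing subspace. That inference fails: a linear combination of eigenfunctions with \emph{different} eigenvalues is not an eigenfunction, and although such a combination with zero trace automatically has zero normal derivative (all members of the subspace satisfy the same Robin condition), it solves no single equation $(\Delta_q-\lambda)u=0$, so unique continuation does not apply to it. The fix is an equality-case analysis: plug the optimal subspace $V$ for $\sigma_1$ into the min-max at $\sigma_2>\sigma_1$; if $\lambda_{q,k}(\sigma_2)=\lambda_{q,k}(\sigma_1)$, a maximizer $u^*\in V$ of the $\sigma_2$-quotient must simultaneously have zero trace and attain the value $\lambda_{q,k}(\sigma_1)$ for the $\sigma_1$-quotient over $V$; the latter forces $u^*$ into the top eigenspace, i.e.\ $u^*$ \emph{is} an eigenfunction, and then vanishing Cauchy data plus unique continuation (via zero-extension across the Lipschitz boundary, valid for $L^\infty$ potentials) gives $u^*=0$, a contradiction. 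So strictness is recoverable, but only through this detour. Second, the $\liminf$ direction of (b) -- that the Robin eigenvalues cannot stay below the Dirichlet ones as $\sigma\to-\infty$ -- is stated only as a plan; you correctly identify it as the real work. It can be completed along the lines you indicate: for $\sigma\ll 0$, coercivity forces $\int_{\partial\Omega}|u^{\sigma}_j|^2\le C/|\sigma|$ together with uniform $H^1$ bounds on the first $k$ eigenfunctions, and weak $H^1$ limits (using compactness of the trace map and of $H^1\hookrightarrow L^2$) span a $k$-dimensional subspace of $H^1_0(\Omega)$ witnessing $\lambda_{q,k}^D\le\lim_{\sigma\to-\infty}\lambda_{q,k}(\sigma)$. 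As written, though, your proof of (a) contains a genuine gap and your proof of (b) is an outline rather than an argument.
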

The proof of this proposition follows, nearly verbatim, the proof presented in \cite[Proposition 3]{AM2} and \cite[Section 2]{AM1}. For the sake of completeness and the reader's convenience, we give the proof. 
\begin{proof}

To prove a), note that by the max-min principle for eigenvalues, we have $$\lambda_{q,k}(\sigma)=\sup_{V_{n-1}}\inf\{b_{q,\sigma}(u): u\in V_{n-1}, \|u\|_{L^2(\Omega)}=1\}$$
where the supremum is taken over all subspaces $V_{n-1}\subset H^1(\Omega)$ of codimension $n-1$.  Since $b_{q,\sigma}(u)$ is strictly decreasing in $\sigma$, it follows that $\lambda_{q,k}(\sigma)$ is decreasing. To show that it is strictly decreasing, assume to the contrary that for some $\sigma<\tilde \sigma$,  $\lambda_{q,k}(\sigma)=\lambda_{q,k}(\tilde\sigma)$. It implies that $\lambda:=\lambda_{q,k}(\sigma)$ is constant on $[\sigma,\tilde\sigma]$. By Corollary \ref{AM:DtNRobin}, $[\sigma,\tilde\sigma]$ must be a subset of the spectrum of $\mathcal{D}_{q,\lambda}$. This contradicts the fact that the spectrum of $\mathcal{D}_{q,\lambda}$ is discrete. 

To prove b), we first show the continuity of the resolvents  $(\mu+\Delta_{q,\sigma})^{-1}$, $\sigma\in[-\infty,\infty)$. It was shown for sufficiently large $\mu$, in the $q=0$ case, in \cite[Proposition 2.6]{AM1}. The proof remains the same and the statement remains true. Thus, for $\mu$ large enough, 
\[\lim_{s\to\sigma}(\mu+\Delta_{q,s})^{-1}=(\mu+\Delta_{q,\sigma})^{-1}\]
and in particular when $\sigma=-\infty$, $\Delta_{q,-\infty}=\Delta_{q}^D$. Hence \[\lim_{s\to-\infty}(\mu+\Delta_{q,s})^{-1}=(\mu+\Delta_{q}^D)^{-1}.\]
We can now use \cite[Proposition 2.8]{AM1} to conclude that  for every $k\ge 1$ and $s\in[-\infty,\infty)$,
\[\lim_{\sigma\to s}\lambda_{q,k}(\sigma)=\lambda_{q,k}(s).\]
In particular, for $s=-\infty$
\[\lim_{\sigma\to-\infty}\lambda_{q,k}(\sigma)=\lambda_{q,k}^D.\]

For c), assume that $\lambda_{q,k}(\sigma)$ is bounded below by some $\lambda\in \mathbb{R}$ for all $\sigma\in \mathbb{R}$, i.e. $$\lambda_{q,k}(\sigma)>\lambda,\qquad \sigma\in \mathbb{R}.$$
Note that $\lambda< \lambda_{q,k}(\sigma)\le\lambda_{q,k+1}(\sigma)$ for all $\sigma$. 
By Corollary \ref{AM:DtNRobin} we have that the spectrum of $\mathcal{D}_{q,\lambda}$ is the set  $$\{\sigma\in \mathbb{R}: \lambda=\lambda_{q,j}(\sigma)~~\text{for some $j=1,\cdots,k-1$}\}.$$ However, this set is finite by part a). This is impossible. \end{proof}

\begin{proposition}\label{d+k}
For any $\lambda\in \mathbb{R}$, consider $d\in\mathbb{N}\cup\{0\}$ such that $\lambda_{q,d}^D\le \lambda< \lambda_{q,d+1}^D$. By  convention $\lambda_{q,0}^D=-\infty$. Then 
for every $k\ge 1$, there exists  a unique $s_k\in \mathbb{R}$ such that $\lambda_{q,k+d}(s_k)=\lambda$. Moreover, $s_k=\sigma_k(\mathcal{D}_{q,\lambda})$ for every $k\ge1$.
\end{proposition}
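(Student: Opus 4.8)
The plan is to combine the monotonicity and limiting behavior of the functions $\lambda_{q,k}(\sigma)$ established in Proposition \ref{lambda_q,k} with a counting argument linking the Robin spectrum to the Dirichlet-to-Neumann spectrum via Corollary \ref{AM:DtNRobin}. The core idea is that, for each fixed $k$, the equation $\lambda_{q,m}(\sigma)=\lambda$ defines the Steklov spectrum of $\mathcal{D}_{q,\lambda}$ (reading the two-parameter problem in the Robin direction), and I need to identify which branch index $m$ produces the $k$th such crossing.

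First I would fix $\lambda$ and the associated $d$ with $\lambda_{q,d}^D\le\lambda<\lambda_{q,d+1}^D$. For the existence and uniqueness of $s_k$, I would apply the intermediate value theorem to the branch $\lambda_{q,k+d}(\cdot)$. By Proposition \ref{lambda_q,k}(c), $\lambda_{q,k+d}(\sigma)\to-\infty$ as $\sigma\to\infty$, so the branch eventually drops below $\lambda$; by Proposition \ref{lambda_q,k}(b), $\lambda_{q,k+d}(\sigma)\to\lambda_{q,k+d}^D$ as $\sigma\to-\infty$, and since $k+d\ge d+1$ we have $\lambda_{q,k+d}^D\ge\lambda_{q,d+1}^D>\lambda$, so the branch starts above $\lambda$. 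Continuity on $[-\infty,\infty)$ then guarantees at least one crossing, and strict monotonicity from part (a) guarantees exactly one. This yields the unique $s_k$ with $\lambda_{q,k+d}(s_k)=\lambda$.

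The more delicate part is showing $s_k=\sigma_k(\mathcal{D}_{q,\lambda})$, i.e.\ that this crossing is precisely the $k$th Steklov eigenvalue in increasing order. Here I would argue that the full Steklov spectrum of $\mathcal{D}_{q,\lambda}$, counted with multiplicity, is exactly the multiset of all pairs $(m,\sigma)$ with $\lambda_{q,m}(\sigma)=\lambda$, by Corollary \ref{AM:DtNRobin}. The key structural fact is that only branches with $m\ge d+1$ ever attain the value $\lambda$: for $m\le d$ we have $\lambda_{q,m}^D\le\lambda_{q,d}^D\le\lambda$, and since each branch is strictly decreasing with limiting value $\lambda_{q,m}^D$ at $\sigma=-\infty$, the branch $\lambda_{q,m}(\sigma)$ stays strictly below $\lambda$ for all finite $\sigma$ and never crosses it. Thus the branches contributing crossings are exactly $\lambda_{q,d+1},\lambda_{q,d+2},\dots$, each contributing exactly one crossing by the IVT/monotonicity argument above.

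Finally I would order these crossings. Since each branch $\lambda_{q,m}$ is strictly decreasing and the branches are ordered pointwise ($\lambda_{q,m}(\sigma)\le\lambda_{q,m+1}(\sigma)$), the crossing value $s_k$ of the branch $\lambda_{q,k+d}$ increases strictly with $k$: a higher branch sits pointwise above a lower one, so it must travel further right before descending to the level $\lambda$. Hence the crossings of $\lambda_{q,d+1},\lambda_{q,d+2},\dots$ occur at strictly increasing values of $\sigma$, and the crossing of $\lambda_{q,k+d}$ is the $k$th smallest. This identifies it as $\sigma_k(\mathcal{D}_{q,\lambda})$. I expect the main obstacle to be handling multiplicities carefully: when a branch is touched by several indices simultaneously, or when several branches cross at the same $\sigma$, I must ensure the counting via Corollary \ref{AM:DtNRobin} (which preserves geometric multiplicity) matches the ordering with multiplicity, so that the bijection between the $k$th crossing and $\sigma_k$ is exact rather than merely up to multiplicity bookkeeping.
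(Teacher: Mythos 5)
Your existence/uniqueness argument and your identification of which branches can reach the level $\lambda$ (namely only $\lambda_{q,m}$ with $m\ge d+1$, since for $m\le d$ strict monotonicity gives $\lambda_{q,m}(\sigma)<\lambda_{q,m}^D\le\lambda_{q,d}^D\le\lambda$ for all finite $\sigma$) match the paper's proof exactly. The gap is in your final ordering step. The claim that the crossing values $s_k$ are \emph{strictly} increasing in $k$ is false: the branches are only ordered non-strictly, $\lambda_{q,m}(\sigma)\le\lambda_{q,m+1}(\sigma)$, and they can touch, so several branches can cross the level $\lambda$ at the same $\sigma$. This is not a pathology — it happens precisely when $\mathcal{D}_{q,\lambda}$ has a multiple eigenvalue, e.g.\ for the unit disk with $q=0$, $\lambda=0$, where every nonzero Steklov eigenvalue has multiplicity two and hence two Robin branches hit level $0$ at the same $\sigma$. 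Taken literally, your argument would prove that the spectrum of $\mathcal{D}_{q,\lambda}$ is always simple, which is wrong. One can only conclude $s_k\le s_{k+1}$ (as the paper does, by contradiction from monotonicity).

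The issue you flag at the end as "the main obstacle" — matching multiplicities — is not a side remark to be checked; it is the substantive half of the proof, and leaving it unresolved means the identity $s_k=\sigma_k(\mathcal{D}_{q,\lambda})$ is not established. What is needed, and what the paper carries out, is the following: suppose $s:=s_k=s_{k+1}=\cdots=s_{k+p-1}<s_{k+p}$, so $s$ appears with multiplicity $p$ in the sequence $\{s_j\}$. Then the branches $\lambda_{q,k+d},\ldots,\lambda_{q,k+d+p-1}$ all equal $\lambda$ at $\sigma=s$, while the adjacent branches do not ($\lambda_{q,k+d-1}(s)<\lambda$ using either $\lambda_{q,d}^D\le\lambda$ when $k=1$ or $s_{k-1}<s_k$ when $k>1$, and $\lambda_{q,k+d+p}(s)<\lambda_{q,k+d+p}(s_{k+p})=\lambda$). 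Hence the Robin eigenvalue $\lambda$ of $\Delta_{q,s}$ has multiplicity exactly $p$, and only then does Proposition \ref{kernels} (equality of geometric multiplicities under the trace isomorphism) let you conclude that $s$ has multiplicity exactly $p$ in the spectrum of $\mathcal{D}_{q,\lambda}$, so that the two non-decreasing sequences $\{s_j\}$ and $\{\sigma_j(\mathcal{D}_{q,\lambda})\}$ coincide term by term. Without this two-sided count your argument only gives equality of the underlying sets of values, not the indexed identification $s_k=\sigma_k(\mathcal{D}_{q,\lambda})$.
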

The proof follows the same line of argument as in the proof of Proposition 4.5 in \cite{AM1}; see also \cite[Proposition 4]{AM2}.
\begin{proof}
By Proposition \ref{lambda_q,k}, we have $$\lim_{\sigma\to-\infty}\lambda_{q,k+d}(\sigma)=\lambda_{q,k+d}^D>\lambda\ge \lambda_{q,d}^D,\qquad \lim_{\sigma\to\infty}\lambda_{q,k}(\sigma)=-\infty,$$ for every $k\ge1$. Thus, the existence and uniqueness of $s\in\mathbb{R}$ follows from the fact that $\lambda_{q,k+d}$ is a strictly decreasing continuous function.
If $s\in \{\sigma_j(\mathcal{D}_{q,\lambda})\}$, then there exists $m\in \mathbb{N}$ such that $\lambda_{q,m}(s)=\lambda$. Hence, $m\ge d+1$
 and $s=s_{k}$, where $k=m-d$.  Indeed, thanks to Proposition \ref{lambda_q,k}, for every $m\le d$ and $s\in\mathbb{R}$, $\lambda_{q,m}(s)<\lambda^D_{q,d}\le\lambda.$  This shows that $\{\sigma_j(\mathcal{D}_{q,\lambda})\}$ and $\{s_j\}$ are equal as sets. It remains to show that they are equal as multisets, i.e. their multiplicities are equal. 
 
 It is easy to observe that $s_k\le s_{k+1}$. Indeed, if $s_k>s_{k+1}$, then $$\lambda_{q,d+k+1}(s_{k+1})=\lambda=\lambda_{q,d+k}(s_{k})<\lambda_{q,d+k}(s_{k+1})\le \lambda_{q,d+k+1}(s_{k+1})$$ gives a contradiction. Assume that $s_k$ has multiplicity $p$ and $s:=s_k<s_{k+p}$. Hence, $\lambda_{q,k+d+j}(s)=\lambda$, $j=0,\ldots,p-1$.  But $\lambda_{q,k+d+p}(s)<\lambda_{q,k+d+p}(s_{k+p})=\lambda$. Thus the multiplicity of $\lambda_{q,k+d}(s)$ is at least equal to $p$. If $k=1$, then $\lambda_{q,d-1}(s)<\lambda^D_{q,d-1}\le\lambda$. If $k>1$, by assumption $s_{k-1}<s_k$ and $\lambda_{q,d+k-1}(s)=\lambda_{q,d+k-1}(s_{k})<\lambda=\lambda_{q,d+k-1}(s_{k-1})$. Therefore, in both cases, the multiplicity of $\lambda_{q,k+d}(s)$ is equal to $p$ and so,  by Proposition~\ref{kernels}, is the multiplicity of $\sigma_k(\mathcal{D}_{q,\lambda})$. 
 \end{proof}

Theorem \ref{courantplusd} is now an immediate consequence of Propositions  \ref{d+k} and \ref{kernels}.

 \section*{Acknowledgements}
The authors are grateful to Graham Cox and Alexandre Girouard for helpful discussions.
A.H. gratefully acknowledges the support from  EPSRC grant EP/T030577/1. D.S. is grateful for the support of an FSRG grant from DePaul University.

\bibliographystyle{plain}
\bibliography{ref}
\end{document}